\documentclass[a4paper, 11pt]{amsart}
\usepackage{amsfonts, amsmath, amssymb, amsthm,mathtools, url,dsfont}
\usepackage{geometry}
\geometry{left=1in, right=1in}
\usepackage{graphicx}
\usepackage[english] {babel}
\usepackage{enumerate}
\usepackage{array}
\usepackage{color}
\usepackage[breaklinks]{hyperref}

\newtheorem{theorem}{Theorem}[section]

\newtheorem{lemma}[theorem]{Lemma}

\renewcommand{\Re}{\operatorname{Re}}
\newcommand{\Spoly}{\mathcal{S}_{\textup{poly}}}
\begin{document}

\begin{abstract}
We consider  $L$-functions $L_1,\ldots,L_k$ from the Selberg class which have polynomial Euler product and satisfy Selberg's orthonormality condition. We show that on every vertical line $s=\sigma+it$ with $\sigma\in(1/2,1)$, these $L$-functions simultaneously take large values of size $\exp\left(c\frac{(\log t)^{1-\sigma}}{\log\log t}\right)$  inside a small neighborhood. 
\end{abstract}

\title[]{Joint extreme values of $L$-functions}

\author{Kamalakshya Mahatab} 
\address{Department of Mathematics and Statistics, University of Helsinki, P. O. Box 68, FIN
00014 Helsinki, Finland}
\email{accessing.infinity@gmail.com, kamalakshya.mahatab@helsinki.fi}

\author{\L ukasz Pa\'nkowski} 
\address{Faculty of Mathematics and Computer Science, Adam Mickiewicz University, Uniwersytetu Pozna\'nskiego 4, 61-614 Pozna\'n, Poland}
\email{lpan@amu.edu.pl}

\author{Akshaa Vatwani} 	
\address{Department of Mathematics,	Indian Institute of Technology Gandhinagar, Palaj, Gandhinagar, Gujarat 382355, India}
\email{akshaa.vatwani@iitgn.ac.in }

\thanks{ KM was supported by Grant 227768 of the Research Council of Norway and Project 1309940 of Finnish Academy. \L P was partially supported by the grant no. 2016/23/D/ST1/01149 from the National Science Centre. Part of this  work was carried out when KM was visiting Adam Mickiewicz University, Pozna\'n with support from a grant of \L P and also when he was a Leibniz fellowship at MFO, Oberwolfach. AV
was supported by  the DST INSPIRE Faculty Award Program and  grant  no. ECR/2018/001566 from  SERB-DST.  KM and AV are deeply  grateful to Adam Mickiewicz University for the kind hospitality during their visit.    }

\maketitle

\section{Introduction}

The problem of finding extreme values of the Riemann zeta-function on a given vertical line lying in the right closed half of the critical strip was first investigated by Titchmarsh in 1927 \cite{titch1927}. However,  the main development of this theory transpired  in the 70's of the twentieth century due to the work of Levinson \cite{lev}, Balasubramanian and Ramachandra \cite{BaluRama}, and  Montgomery \cite{montgomery}. It is worth mentioning that Montgomery introduced a new technique based on Dirichlet's theorem on homogeneous Diophantine approximation to prove that for any fixed $\sigma\in (1/2,1)$, any real number $\theta$ and every sufficiently large $T$,  there exists  $t \in [T^{(\sigma-1/2)/3}, T]$ such that  
\begin{equation}\label{mont}
\Re e^{-i\theta}\log\zeta(\sigma+it) \geq \frac{1}{20}\left(\sigma-\frac{1}{2}\right)^{1/2} \frac{(\log T)^{1-\sigma}}{(\log\log T)^{\sigma}}.
\end{equation}
Moreover, he showed that under the Riemann Hypothesis, the above inequality can be extended to $\sigma\in[1/2,1)$ with a slightly better constant and better range of $t$. Such a result  was proved unconditionally in \cite{BaluRama} for $\sigma=1/2$ and $\theta=0$.

Another important breakthrough was achieved very recently by Bondarenko and Seip in \cite{bs1, bs2}. Based on the resonance method introduced by Soundararajan in \cite{sound} and a connection between extreme values of the Riemann zeta-function and the so-called GCD sums (see \cite{aist,hilb}), they showed that
\[
\max_{0\leq t\leq T}\left|\zeta\left(\frac{1}{2}+it\right)\right|\geq \exp\left(\left(1+o(1)\right)\sqrt{\frac{\log T\log\log\log T}{\log\log T}}\right).
\]
Later the above result was optimized by de~la Bret\`eche and Tenenbaum \cite{dlb}. For $\sigma=1$, it was proved by the first author,  Aistleitner and Munsch \cite{aistmunsch}  that
\[\max_{T^{1/2}\leq t\leq T}\left|\zeta\left(1+it\right)\right|\geq e^\gamma(\log\log T + \log\log\log T +O(1)).\]
In view of the aforementioned results it is natural to ask if similar $\Omega$-results hold for other $L$-functions in number theory. A partial answer to this question was given by Balakrishnan \cite{Bala} for the Dedekind zeta-function as well as  by Sankaranarayanan and Sengupta \cite{SankarSengupta} for a certain class of $L$-functions with real Dirichlet coefficients, by generalizing Montgomery's argument to obtain the analogue of \eqref{mont} for those $L$-functions. It is quite remarkable  that Montgomery's approach as well as the resonance method do not seem  (see \cite{ap}) strong enough to get $\Omega$-results even of the same size as in \eqref{mont} for general $L$-functions, if the assumption that  the Dirichlet coefficients are real is dropped. More precisely, as was shown in \cite{ap}, the best known result for $L$-functions from the Selberg class having polynomial Euler product and satisfying Selberg's normality condition is
\[
\max_{t\in[T,2T]} |L(\sigma+it)|\geq \exp\left( \left(C_L(\sigma)+o(1)\right)\frac{(\log T)^{1-\sigma}}{(\log\log T)^{\theta(\sigma)}}\right),
\]
where $C_L(\sigma)$ is an explicitly given positive constant, $\sigma\in[1/2,1)$ is fixed, $T$ is sufficiently large, and $\theta(1/2)=1/2$, $\theta(\sigma)=1$ otherwise. 

Let us recall that the Selberg class $\mathcal{S}$ consists of meromorphic functions $L(s)$ satisfying the following axioms.
\begin{enumerate}[(i)]
	\item {\bf Dirichlet series}: $L(s)$ can be expressed as a Dirichlet series
	\begin{equation*}
	L(s) = \sum_{n=1}^{\infty} \frac{a_L(n)}{n^s},
	\end{equation*}
	which is absolutely convergent in the region $\Re(s)>1$. 
	
	\item {\bf Analytic continuation}: There exists a non-negative integer $m$, such that $(s-1)^m L(s)$ is an entire function of finite order.
	\item {\bf Functional equation}: 
	$L(s)$ satisfies the functional equation 
	\begin{equation*} \label{fneq2}
	\Phi(s) = \theta \overline{\Phi(1-\overline{s})},
	\end{equation*}
	where 
	\begin{equation*}\label{fneq1}
	\Phi(s) := L(s)Q^s \prod_{j=1}^k \Gamma(\lambda_j s + \mu_j),
	\end{equation*}
	$|\theta|=1$, $Q\in \mathbb{R}$, $\lambda_j \geq 0$, and $\mu_j \in \mathbb{C}$ with $\text{Re } \mu_j\ge 0$. 
	
	\item {\bf Ramanujan hypothesis} - For any $\epsilon >0$, 
	\begin{equation*}\label{rhyp}
	|a_F(n)| = O_\epsilon(n^\epsilon).
	\end{equation*}
	
	\item {\bf Euler product} - There is an Euler product of the form
	\begin{equation*}\label{eprod}
	L(s) = \prod_{p \text{ prime}} L_p(s)
	\end{equation*}
	for $\Re(s)>1$, where
	\begin{equation*}
	L_p(s) =  \exp \left( \sum_{j=1}^\infty \frac{b_L(p^j)}{p^{js}}\right) 
	\end{equation*}
	with $b_L(p^j) \in \mathbb{C}$, $b_L(p^j) = O(p^{j \delta})$ for some $\delta < 1/2$.
	
\end{enumerate}

Let us mention that in some cases it is convenient to assume a stronger axiom than (v), namely the so-called polynomial Euler product:
\begin{enumerate}
	\item[(v')] For $\Re(s)>1$,
	\[L(s)=\prod_p\prod_{j=1}^{r_L}\left(1-\frac{\nu_{L,j}(p)}{p^s}\right)^{-1},\]
	where $\nu_{L,j}(p)$ are complex numbers.
\end{enumerate}
We shall denote by $\Spoly$ the subclass of $\mathcal{S}$ consisting of those $L$-functions that satisfy the axiom (v').
It is worth  emphasizing that in all likelihood this stronger assumption does not exclude any interesting $L$-functions in number theory, since all known examples of elements from the Selberg class also satisfy  the axiom (v'), at least under some widely believed conjectures. From our point of view, the most important consequence of (v') is the fact that in addition with the Ramanujan hypothesis (iv), it implies that $|\nu_{L,j}(p)|\leq 1$ (see \cite[Lemma 2.2]{steuding}). Therefore we have
	\[
a_L(p)=b_L(p) = \sum_{j=1}^{r_L}\nu_{L,j}(p)\qquad\text{and}\qquad |a_L(p)|\leq r_L.
\] 

An interesting phenomenon was observed by Bombieri and Hejhal in \cite{BH}, namely they showed the statistical independence of any collection of $L$-functions under a stronger version of Selberg's orthogonality condition. Let us recall that Selberg's condition claims that
\begin{equation}\label{eq:SelbergA}
\sum_{p\leq x}\frac{|a_L(p)|^2}{p} = \kappa_L\log\log x + O(1)
\end{equation}
and, for any distinct primitive functions $L_1, L_2\in\mathcal{S}$, we have 
\begin{equation}\label{eq:SelbergB}
\sum_{p\leq x}\frac{a_{L_1}(p)\overline{a_{L_2}(p)}}{p} = O(1).
\end{equation}
Very recently it was proved by Lee, Nakamura and the second author \cite{LNP} that under some natural assumptions on $L_1,\ldots,L_n\in\mathcal{S}$ and a stronger version of \eqref{eq:SelbergA} and \eqref{eq:SelbergB} for every pair $L_i,L_j$, $i\ne j$, the functions $L_1,\ldots,L_n\in\mathcal{S}$ are jointly universal in the Voronin sense. Roughly  speaking, this means that any non-vanishing analytic functions $f_1(s),\ldots,f_n(s)$ can be approximated uniformly by certain shifts $L_1(s+i\tau),\ldots,L_n(s+i\tau)$ (see \cite[Theorem 1.2]{LNP}). Thus one can easily deduce that $L$-functions, which pairwise satisfy Selberg's orthogonality condition, are in some sense independent. It is hence natural to ask whether on the vertical segment $[\sigma_0+iT,\sigma_0+2iT]$ with $1/2<\sigma_0<1$,  they can take simultaneously or at least in a small neighborhood, extreme values of the size $\exp\left(c\frac{(\log T)^{1-\sigma_0}}{\log\log T}\right)$ for some positive constant $c$. Using a modification of Montgomery's approach \cite{montgomery} together with an idea of Good  \cite{good}, we prove the following result, answering this question in the affirmative. 

\begin{theorem}\label{thm:main}
	Let $\sigma_0\in(1/2,1)$ be fixed and $L_1(s), \hdots, L_k (s)$ be distinct elements of $\Spoly$, whose Dirichlet coefficients $a_{L_j}$ ($j=1,2,\ldots,k$) satisfy the following strong version of Selberg's orthonormality condition
	\begin{equation}\label{SSOC}
	\sum_{p\le x} a_{L_i}(p)  \overline{a_{L_j}(p)}=  \begin{cases}
	\kappa_j\operatorname{Li}(x) + O\left(\frac{x}{\log^A x}\right) &  \text{ if }i=j,\\ 
	O\left(\frac{x}{\log^A x}\right) &  \text{ if }i \neq j,
	\end{cases}
	\end{equation}
	for suitable $\kappa_j>0$. Here and throughout the paper $A$ denotes an arbitrary large positive constant, not necessarily the same at each appearance. 
	
	Moreover, let us assume that there exists $\delta >0$ such that, for $j=1, \hdots, k$,  we have
	\begin{equation} \label{zero density}
	N_{L_j}(\sigma_0, T):=\sharp\{\rho=\beta+i\gamma: L_j(\rho)=0, \beta\geq \sigma_0, \gamma\in [0,T]\} \ll T^{1-\delta}.
	\end{equation}
	Then for every real $\theta_1,\ldots,\theta_k$ and for sufficiently large $T$, there are $t_1, \hdots, t_k \in [T,2T]$ such that  
	\[ \Re e^{-i\theta_j}\log L_j(\sigma_0+ it_j)\gg \frac{(\log T)^{1-\sigma_0 }}{\log \log T}, \quad j =1, \hdots, k \]
	with $|t_i-t_j|\leq 2(\log T)^{(1+\sigma_0)/2}(\log\log T)^{1/2}$ for all $i,j \in \{1, \hdots, k \}$.
\end{theorem}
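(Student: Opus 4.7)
The plan is to reduce the problem to extreme values of a short Dirichlet polynomial over primes, and then apply a joint version of Montgomery's Diophantine approximation argument combined with Good's short-window refinement.

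Taking logarithms in the polynomial Euler product (v') and using $|\nu_{L,j}(p)|\le 1$ (a consequence of (iv) and (v')), the prime-power contributions with $m\ge 2$ are uniformly $O(1)$. For a parameter $y=y(T)$ to be determined, we split the prime sum at $y$; the tail $\sum_{p>y}a_{L_j}(p)/p^{\sigma_0+it}$ is controlled in mean square via the orthonormality \eqref{SSOC} in the case $i=j$, together with Littlewood's lemma and the zero-density estimate \eqref{zero density}. Outside an exceptional set in $[T,2T]$ of small measure, this yields
\[
\log L_j(\sigma_0+it)=\sum_{p\le y}\frac{a_{L_j}(p)}{p^{\sigma_0+it}}+O(1).
\]
It therefore suffices to find $t_1,\ldots,t_k$ in some subinterval of $[T,2T]$ of length $2\ell$, where $\ell=(\log T)^{(1+\sigma_0)/2}(\log\log T)^{1/2}$, such that $\Re\,e^{-i\theta_j}\sum_{p\le y}a_{L_j}(p)/p^{\sigma_0+it_j}\gg (\log T)^{1-\sigma_0}/\log\log T$.

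Following Montgomery's argument in \cite{montgomery}, this truncated prime sum is large precisely when the fractional parts $\{t_j(\log p)/(2\pi)\}$ are close to prescribed targets (depending on $\theta_j$ and $\arg a_{L_j}(p)$) for most primes $p\le y$. To handle all $k$ $L$-functions jointly in a short window, we fix a central point $t_0\in[T,2T]$ and write $t_j=t_0+\tau_j$ with $|\tau_j|\le \ell$. The orthonormality \eqref{SSOC} in the case $i\ne j$ implies that the ideal target vectors for distinct $L_j$'s are essentially uncorrelated, so the $\tau_j$'s can be selected independently, yielding $|t_i-t_j|\le 2\ell$ automatically. A direct application of Dirichlet's simultaneous approximation inside a window of length $\ell$ would control the phases of only $O(\log\log T)$ primes, too few to produce the claimed extreme value. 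Following Good \cite{good}, one circumvents this by replacing pointwise Diophantine approximation with a weighted moment average over the short window, of the schematic form $\int K(t-t_0)\,|P_j(t)|^{2m}\,dt$ with $P_j(t)=\sum_{p\le y}a_{L_j}(p)/p^{\sigma_0+it}$, a smooth non-negative kernel $K$ supported in $[-\ell,\ell]$, and a carefully chosen moment order $m$. Once more, \eqref{SSOC} controls the cross-terms in the expansion and isolates a main term forcing $|P_j|$ to be large at some $t_j$ in the window.

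The principal technical difficulty is the scale mismatch between the polylogarithmic window length $\ell$ and the prime cutoff $y$ (of order $\log T$ or slightly larger) required to produce the claimed extreme value size. Bridging this gap through the moment/kernel argument sketched above is the heart of the proof; the delicate part is the calibration of $y$, $m$, and $K$ to achieve the precise window exponent $(1+\sigma_0)/2$. The zero-density hypothesis \eqref{zero density} plays only the auxiliary role of controlling the exceptional set in the reduction step, and the polynomial Euler product (v') enters only through the uniform bound on $|\nu_{L,j}(p)|$.
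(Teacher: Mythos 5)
Your overall framing---reduce $\log L_j$ to a short prime sum and then find a common short window where all $k$ of these polynomials are simultaneously large with prescribed angles $\theta_j$---matches the paper's plan, but your resolution of the two central difficulties diverges from the paper in ways that, as written, don't obviously close.

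First, the scale mismatch. You correctly note that running Diophantine approximation directly in a window of length $\ell=(\log T)^{(1+\sigma_0)/2}(\log\log T)^{1/2}$ controls too few prime phases. But your fix---a $2m$-th moment $\int K(t-t_0)|P_j(t)|^{2m}\,dt$ over the short window---is not how Good's idea works and is left entirely schematic (no calibration of $m$, no account of how cross-terms for $k$ different $L$-functions with prescribed \emph{angles} $\theta_j$ survive a moment expansion). The paper does something structurally different: it applies a Good-type integral average of $\log L$ against a kernel of the shape $\bigl(\tfrac{\sin(t/2)}{t}\bigr)^2 e^{i\varkappa t}$ over $|t|\leq\tau$ (Lemma~\ref{lem:shortDir}), which produces a \emph{linear} lower bound $\Re e^{-i\theta}\log L(\sigma_0+it_0+it')\gg \Re\sum_p a_L(p)p^{-\sigma_0-it_0}w_p$ for some $t'\in[-\tau,\tau]$, with the prime sum evaluated at the common center $t_0$. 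The Diophantine approximation (Chen's quantitative Kronecker theorem, Lemma~\ref{lem:chen}, not Dirichlet's) is then run not on the short window at all but on a \emph{long} interval $T^{(l)}$ of polynomial length $T^\mu$, which is plenty of room to control the $\asymp \rho/\log\rho$ primes near $\rho\asymp\log T$. The short window $|t'|\leq\tau$ arises from the kernel, not from the approximation step. This is a genuine conceptual inversion relative to your sketch.

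Second, and more seriously, the joint solvability is missing. You assert that orthonormality \eqref{SSOC} makes the target phase vectors for distinct $L_j$ ``essentially uncorrelated, so the $\tau_j$'s can be selected independently,'' but this is where the real work lies and your proposal gives no mechanism. The paper works with a \emph{single} common $t_0$ for all $j$ (the separate $t_j$ arise only from the $[-\tau,\tau]$ offsets in Lemma~\ref{lem:shortDir}) and proves a dedicated lemma (Lemma~\ref{lem:denseness}): via the Hahn--Banach separation theorem, the convex set $\{(g_1(z),\ldots,g_k(z)):z\in\Delta_\rho\}$ with $g_j(z)=\sum_p a_{L_j}(p)w_p p^{-\sigma_0}z_p$ contains any target $(\xi_1,\ldots,\xi_k)$ of the right magnitude, precisely because \eqref{SSOC} makes the Gram-type quadratic form $\sum_p w_p p^{-\sigma_0}|\sum_j l_j a_{L_j}(p)|^2$ essentially diagonal. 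This is the step that turns ``the $L_j$ are independent'' into an actual simultaneous choice of phases, and it has no counterpart in your write-up. An auxiliary lemma of Good is also needed to upgrade a solution with $|z_p|\leq1$ to genuinely unimodular $z_p$ up to a negligible error.

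Finally, your use of the zero-density hypothesis (mean-square tail control plus Littlewood's lemma to trim an exceptional set) is plausible but again not what the paper does: the paper uses \eqref{zero density} only to locate a zero-free rectangle above some $T^{(l_0)}$, which is a precondition for applying Lemma~\ref{lem:shortDir} (the Good-type integral needs $\log L$ to be holomorphic there). In short: the skeleton is right, but both load-bearing steps---bridging the scale gap and achieving joint largeness with prescribed angles---are replaced by sketches that do not reproduce the paper's mechanism and are not developed far enough to tell whether they could be made to work.
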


As was mentioned earlier, the assumption \eqref{SSOC} implies  Selberg's original orthogonality condition \eqref{eq:SelbergA} and \eqref{eq:SelbergB}. Nevertheless, it is quite likely that \eqref{SSOC} is fulfilled by all $L$-functions. We refer to \cite[Section~4]{LNP} for a detailed discussion of this matter. Here we only mention that the evidence for the truth of this conjecture lies in  the fact that there is a grand hypothesis that each $L$-function from the Selberg class can be defined as a suitable automorphic $L$-function, and so far, all known automorphic $L$-functions satisfying Selberg's conjecture  do fulfill  \eqref{SSOC}.

Let us note that putting $\theta_1=\ldots=\theta_k=0$ in Theorem \ref{thm:main} yields that in every interval $[T,2T]$ there is a short interval containing $t_1,\ldots,t_k$ such that $|L_1(\sigma_0+it_1)|,\ldots,|L_k(\sigma_0+it_k)|$ are at least of size $\exp\left(c\frac{(\log T)^{1-\sigma_0}}{\log\log T}\right)$ for some positive constant $c$. On the other hand, taking $\theta_1=\ldots=\theta_k = \pi$ we have that the absolute values of the given $L$-functions can be very small in a close neighborhood, namely at most of size $\exp\left(-c\frac{(\log T)^{1-\sigma_0}}{\log\log T}\right)$. Interestingly, one can put some $\theta_j$'s as $0$, while others as $\pi$, to show that $L$-functions satisfying \eqref{SSOC} are indeed independent, since in a very close neighborhood some of them might take large values while others take extremely small values. Similarly, one can consider the case when $\theta_j=\pm \frac{\pi}{2}$ to deduce similar statements for  arguments of the given $L$-functions.

We may extend Theorem~\ref{thm:main} to the case $\sigma_0=1$ to show joint large values of $L_1, \hdots, L_k$  of size $O(\log\log T)$ and to $\sigma_0=1/2$ under the generalized Riemann hypothesis to show joint large values of size $\exp \left(c\frac{(\log T)^{1/2 }}{\log \log T}\right)$ for some constant $c>0$. The former  involves recalculating Lemma \ref{lem:denseness} for $\sigma_0=1$ and choosing $M$ and $\rho$ as in Section 5 of \cite{ap}. 

It is also possible to further generalize our theorem to a more generic version which includes Dirichlet polynomials having coefficients as random variables \cite{kamal17, seipsaksman}, and also includes other $L$-functions which are not in the Selberg class. Another possible direction for future research is to investigate when two $L$-functions have a high probability of  taking large values in a small neighbourhood  by combining our techniques with recent developments from \cite{arguin, harper, najnudel}.

\section{Auxiliary lemmas}

The following lemma allows us to estimate a given $L$-function by a suitable Dirichlet polynomial. 

\begin{lemma} \label{lem:shortDir}
Let $L(s) \in \Spoly$ be defined by the Dirichlet series $\sum_{n \ge 1} a_L(n) n ^{-s}$ for $\Re s >1 $.  Suppose that $1/2\leq \sigma_0 \leq 1$, $t_0\geq 15$, $\tau=\tau(t_0)=O(t_0)$, and $L(s)$ does not vanish on $\{a+ib : a\geq \sigma_0, |b-t_0| \le 2\tau\}$. Then for any real $\theta$ and $\rho > 2$ we have 
\begin{align*}
\Re e^{-i\theta}\log L(s_0+it)\geq 
 \frac{1}{2} 
\Re\sum_{|\log (p/\rho)| \le 1} \frac{ a_L(p)} { p^{s_0}} \left(1-  \left| \log \frac{p}{\rho}  \right|  \right) + O\left(1+\rho\frac{\tau+\log t_0}{\tau^2}\right),
\end{align*}
for some $t\in[-\tau,\tau]$ , as $T\rightarrow \infty$. 
\end{lemma}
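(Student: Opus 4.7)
My approach will be Fej\'er-kernel averaging in the spirit of Montgomery. Set $K(t)=\bigl(\frac{\sin(t/2)}{t/2}\bigr)^{2}$, whose Fourier transform is $\widehat K(\xi)=2\pi(1-|\xi|)_{+}$, and define the nonnegative weight
\[
W(t) := K(t)\bigl(1+\cos(t\log\rho+\theta)\bigr) \ge 0.
\]
Nonnegativity of $W$ yields the elementary inequality
\[
\max_{|t|\le\tau}\Re e^{-i\theta}\log L(s_0+it) \;\ge\; \frac{\int_{-\tau}^{\tau}W(t)\, \Re e^{-i\theta}\log L(s_0+it)\,dt}{\int_{-\tau}^{\tau}W(t)\,dt},
\]
and a direct Fourier computation evaluates the denominator to $2\pi+O(1)$ (in fact $2\pi+O(1/\tau)$ once $\rho>e$), so the task reduces to lower-bounding the numerator.

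To evaluate the numerator, I first extend the range of integration to all of $\mathbb{R}$ and then insert the Dirichlet expansion
\[
\log L(s_0+it) = \sum_{p}\sum_{j\ge 1} \frac{b_L(p^j)}{p^{j(s_0+it)}},
\]
which on the line $\Re s=\sigma_0$ is justified by shifting a contour from $\Re s=c>1$ through the zero-free box supplied by the non-vanishing hypothesis. The Fourier identity $\int_{\mathbb{R}} K(t)\rho^{it}p^{-ijt}\,dt = 2\pi(1-|\log(p^j/\rho)|)_{+}$ together with its $\rho^{-it}$ analogue (which vanishes for $\rho>2$ and $p^j\ge 2$, since then $\log(\rho p^j)>1$), combined with the decomposition $\Re(e^{-i\theta}g)=\tfrac12(e^{-i\theta}g+e^{i\theta}\bar g)$, collapses the integral over $\mathbb{R}$ to
\[
\pi\,\Re\!\!\!\sum_{|\log(p/\rho)|\le 1}\!\!\!\frac{a_L(p)}{p^{s_0}}\bigl(1-|\log(p/\rho)|\bigr) + O(1),
\]
the $O(1)$ controlling the $j\ge 2$ prime-power contributions via $|\log(p^j/\rho)|\le 1 \Rightarrow p\le (e\rho)^{1/j}$ together with $|b_L(p^j)|\le r_L/j$.

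What remains is to estimate the tail $\int_{|t|>\tau}W(t)\,\Re e^{-i\theta}\log L(s_0+it)\,dt$. Using $K(t)\ll t^{-2}$ for $|t|\ge 1$ and, within the doubled box $|t|\le 2\tau$, the Borel--Carath\'eodory bound $|\log L(s_0+it)|\ll \log t_0$ (a direct consequence of the non-vanishing hypothesis), supplemented by a standard convexity bound outside the box, one obtains a tail of the stated form $O(1+\rho(\tau+\log t_0)/\tau^{2})$; the factor $\rho$ arises from combining the $1/\tau$ decay of the truncated Fourier integrals $\int_{|t|>\tau}K(t)\rho^{\pm it}p^{-ijt}\,dt$ with the crude summed bound $\sum_{|\log(p/\rho)|\le 1}|a_L(p)|/p^{\sigma_0}\ll \rho$. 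Dividing numerator by denominator then recovers the claimed factor $\tfrac12$ in the main term. The chief technical obstacle will be the rigorous justification of the contour shift that legitimizes the Dirichlet series expansion on the line $\Re s=\sigma_0<1$, together with careful bookkeeping that forces the various error contributions into the precise stated form.
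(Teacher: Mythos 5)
Your approach is essentially the same as the paper's: a Fej\'er-kernel average against the nonnegative weight $K(t)(1+\cos(t\log\rho+\theta))$, with the $1+\cos$ factor selecting the direction $\theta$ and the kernel's triangular Fourier transform producing the arithmetic weights $(1-|\log(p/\rho)|)_+$. The one structural difference is that the paper does not re-derive the underlying Fourier identity; it quotes Lemma 4.1 of Pa\'nkowski--Steuding (itself a descendant of Montgomery's Lemma 2), which already delivers
\[
\frac{2}{\pi}\int_{-\tau}^{\tau}\log L(s_0+it)\Bigl(\tfrac{\sin(t/2)}{t}\Bigr)^{2}e^{i\varkappa t}\,dt
=\sum_{p,k,j}\frac{\nu_{L,j}(p)^k}{kp^{ks_0}}\max(0,1-|\varkappa-k\log p|)+O\!\Bigl(e^{|\varkappa|}\tfrac{\tau+\log t_0}{\tau^2}\Bigr),
\]
and then combines the three values $\varkappa=-\log\rho,0,\log\rho$ exactly as you do. So your proposal amounts to unwinding that citation rather than taking a different route.

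One point deserves a warning, though. You propose to estimate the tail by extending $\int_{-\tau}^{\tau}$ to $\int_{\mathbb R}$ directly on the line $\Re s=\sigma_0$ and then bounding $\int_{|t|>\tau}W(t)\log L(s_0+it)\,dt$ via Borel--Carath\'eodory inside $|t|\le 2\tau$ and ``a standard convexity bound outside the box.'' That last step does not work as stated: the hypothesis only guarantees $L\neq 0$ in the strip $|b-t_0|\le 2\tau$, so for $|t|>2\tau$ the function $\log L(s_0+it)$ is not even defined (let alone bounded) along a continuous branch, and a convexity bound on $|L|$ gives no control on $\log L$ near potential zeros. The version in Pa\'nkowski--Steuding (and in Montgomery) avoids this by keeping the $t$-integral confined to $[-\tau,\tau]$ throughout: the full-line Fej\'er integral appears only for the individual Dirichlet-series terms $p^{-j(c+it)}$ with $\Re s=c>1$, where it converges absolutely, and the passage from $\Re s=\sigma_0$ to $\Re s=c$ is a contour shift whose horizontal segments lie at $t=\pm\tau$, inside the zero-free box; Borel--Carath\'eodory is used only there. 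Your error estimate $O(1+\rho(\tau+\log t_0)/\tau^{2})$ is of the right shape, but the justification needs to follow this more disciplined contour route rather than naively truncating at $\Re s=\sigma_0$.
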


A version of the above lemma for the Riemann zeta function was used by Montgomery \cite{montgomery} to demonstrate large values. Later, various generalizations of Montgomery's lemma have
appeared in \cite{good}, \cite{lukaszSteu} and \cite{kamal18} to exhibit different properties of $L$-functions. Since our formulation of the lemma is slightly different 
from that in the existing literature,  we briefly reprove the lemma below.
\begin{proof}
From \cite[Lemma 4.1]{lukaszSteu} with $\alpha=1/2$, we obtain for any real $\varkappa$, 
\begin{multline*}
\frac{2}{\pi}\int_{-\tau}^{\tau}\log L(s_0+it) \left(\frac{\sin(t/2)}{t}\right)^2 e^{i\varkappa t}d t  \\
=\sum_{p}\sum_{k\geq 1}\sum_{j=1}^{r_L}\frac{\nu_{L,j}(p)^k}{kp^{ks_0}}\max(0, 1-|\varkappa -k\log p|) +O(e^{|\varkappa|\frac{\tau+\log t_0}{\tau^2}}).
\end{multline*}
Now, we use the last equation for $\varkappa = -\log \rho, 0, \log \rho$, and multiply the resulting equations by $\frac{1}{2}e^{-i\theta}, 1, \frac{1}{2}e^{i\theta}$, respectively. Adding these formulae up, taking into account the  elementary estimates
\[
\int_{-\tau}^{\tau}\left(\frac{\sin t/2}{t}\right)^2 \left(1+\cos(\theta+t\log \rho)\right)dt\leq \frac{\pi}{2},
\]
and 
\begin{align*}
\sum_{p}\sum_{k\geq 2}\sum_{j=1}^{r_L}\frac{\nu_j(p)^k}{kp^{ks_0}}\max(0, 1-|\log \rho -k\log p|)&\ll \sum_{2\leq k\leq \log(e\rho)}\sum_{e^{-1}\rho\leq p^k\leq e\rho}\frac{1}{p^{k\sigma_0}}\\
&\ll \sum_{2\leq k\leq \log(e\rho)} \frac{\rho^{1/k-\sigma_0}}{\log \rho}\ll \rho^{1/2-\sigma_0} \ll 1,
\end{align*}
proves the lemma. 
\end{proof}

\begin{lemma} \label{lem:denseness} 
Let $\sigma_0\in (1/2,1)$ be fixed and $L_1(s), \hdots, L_k (s)$ be distinct elements of $\Spoly$ defined by the Dirichlet series $\sum_{n \ge 1} a_{L_j}(n) n ^{-s}$ for $\Re s >1$, $j=1,2,\ldots,k$, whose coefficients satisfy \eqref{SSOC}.
 Let $\Delta_{\rho}$ denote the set of tuples $z= (z_p)_{e^{-1}\rho \le p \le e\rho} $ with  $z_p \in \mathbb C$, $|z_p|\le 1$, and $w_p=(1-|\log(p/\rho)|)$. Moreover, suppose that $g_j(z)$, $j =1, 2,\ldots, k$ are functions on $\Delta_{\rho}$ defined by 
\[
g_j(z) = \sum_{e^{-1}\rho\le p\le e\rho}  \frac{a_{L_j}(p)w_p}{p^{\sigma_0}} z_p, 
\]
and 
$\xi_1, \hdots, \xi_k$ are arbitrary complex numbers satisfying
\begin{equation*} 
|\xi_j| \leq \frac{c_0}{k} \frac{\min_j\kappa_j}{\max_j r_{L_j}}  \rho^{1-\sigma_0} / \log \rho, \qquad  \qquad (j =1, \hdots, k),
\end{equation*}
where $c_0$ is an arbitrary fixed constant such that $c_0<C_{\sigma_0} :=\big(2\frac{\sinh((1-\sigma_0)/2)}{1-\sigma_0}\big)^2$.
Then the system of equations 
\begin{align}\label{sys}
 g_j(z)= \xi_j, \qquad  \qquad (j =1, \hdots, k)
\end{align}
has a solution $z$ in $\Delta_{\rho}$ for all sufficiently large $\rho$. 
\end{lemma}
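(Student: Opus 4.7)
The plan is to recast the solvability question as whether the image of the continuous $\mathbb{C}$-linear map $T\colon\Delta_\rho \to \mathbb{C}^k$, $T(z) = (g_1(z),\ldots,g_k(z))$, contains the polydisk $\Pi_R := \{\xi \in \mathbb{C}^k : |\xi_j| \leq R\}$ with $R = (c_0/k)(\min_j\kappa_j/\max_j r_{L_j})\rho^{1-\sigma_0}/\log\rho$. Because $\Delta_\rho$ is convex, compact and balanced (closed under multiplication by unit-modulus scalars), so is $T(\Delta_\rho)$; the bipolar theorem for convex balanced sets then tells us that $T(\Delta_\rho) \supset \Pi_R$ if and only if for every $c = (c_1,\ldots,c_k) \in \mathbb{C}^k$ one has
\[
\max_{z \in \Delta_\rho} \Bigl|\sum_{j=1}^k \overline{c_j}\, g_j(z)\Bigr| \;\geq\; R\sum_{j=1}^k |c_j|.
\]
Optimizing $z_p$ pointwise over the closed unit disk, the left-hand side equals $\sum_{p} w_p |Y_p|$, where $Y_p := \sum_j \overline{c_j}\, a_{L_j}(p)/p^{\sigma_0}$; so the whole problem reduces to a uniform lower bound for this weighted $\ell^1$ sum, with $c$ arbitrary.

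For this lower bound I plan to use the crude pointwise ceiling $p^{\sigma_0}|Y_p| \leq (\max_j r_{L_j})\sum_j|c_j|$, coming from the Ramanujan bound $|a_{L_j}(p)| \leq r_{L_j}$ (a consequence of axiom (v')), to pass from the $\ell^1$ sum to an $\ell^2$ sum:
\[
\sum_{e^{-1}\rho \leq p \leq e\rho} w_p\, p^{\sigma_0}|Y_p|^2 \;\leq\; (\max_j r_{L_j})\Bigl(\sum_j|c_j|\Bigr) \sum_p w_p |Y_p|.
\]
Thus it suffices to bound $\sum_p w_p p^{\sigma_0}|Y_p|^2$ from below. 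Expanding the square produces a diagonal $\sum_j |c_j|^2 \sum_p w_p |a_{L_j}(p)|^2 p^{-\sigma_0}$ plus off-diagonal cross-terms. Partial summation against the orthonormality \eqref{SSOC}, followed by the substitution $p = \rho e^u$, converts each diagonal piece into $\kappa_j(\rho^{1-\sigma_0}/\log\rho) \int_{-1}^1 (1-|u|) e^{(1-\sigma_0)u}\,du + O(\rho^{1-\sigma_0}/\log^A\rho)$, while the off-diagonal terms fall into the same negligible error. An elementary integration by parts evaluates the integral in closed form to $C_{\sigma_0} = (2\sinh((1-\sigma_0)/2)/(1-\sigma_0))^2$. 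Combining this with the trivial inequality $\sum_j |c_j|^2 \geq k^{-1}(\sum_j|c_j|)^2$ yields exactly the needed $R$ for any $c_0 < C_{\sigma_0}$, once $\rho$ is large enough to absorb the error terms.

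The delicate step, in my view, is the duality reduction itself: one must verify that $T(\Delta_\rho)$ is $\mathbb{C}$-balanced so that it suffices to test against $\mathbb{C}$-linear functionals $\psi(\xi) = \sum_j \overline{c_j}\xi_j$ — whose supremum cleanly evaluates to $\sum_p w_p|Y_p|$ — rather than against general $\mathbb{R}$-linear functionals. Everything arithmetic after that is a single partial summation driven by \eqref{SSOC}, and the closed-form identity $\int_0^1 (1-u)\cosh((1-\sigma_0)u)\,du = (\cosh(1-\sigma_0)-1)/(1-\sigma_0)^2$ yielding $C_{\sigma_0}$ is a purely calculus-level check, so I do not expect further serious obstacles.
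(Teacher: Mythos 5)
Your proposal is correct and follows essentially the same route as the paper: a Hahn--Banach duality step reduces solvability to the dual inequality $\sum_p w_p|Y_p| \geq R\sum_j|c_j|$, and that inequality is then established by exactly the same chain — Ramanujan-type bound $|a_{L_j}(p)|\le r_{L_j}$ to pass from the $\ell^1$ sum to the $\ell^2$ sum, expansion of the square into diagonal and cross terms, partial summation against \eqref{SSOC}, the closed-form evaluation $C_{\sigma_0}$, and Cauchy--Schwarz in the form $\sum_j|c_j|^2 \geq k^{-1}(\sum_j|c_j|)^2$. The only cosmetic difference is that you package the duality as a bipolar/support-function comparison showing $T(\Delta_\rho)$ contains the whole polydisk, whereas the paper phrases the same Hahn--Banach separation directly for the given point $(\xi_1,\dots,\xi_k)$; this changes nothing in the arithmetic.
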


\begin{proof}
We will first show that the system \eqref{sys} has a solution in  $\Delta_{\rho}$, if and only if for arbitrary $l_j \in \mathbb{C}$, there exists $z \in \Delta_{\rho}$ such that   
\begin{equation}\label{hb}
\sum_{j=1}^k l_j g_j(z) = \sum_{j=1}^k l_j \xi_j. 
\end{equation}
Trivially, every solution of the system \eqref{sys} is a solution of \eqref{hb}.  In order to prove the other direction, 
we  apply the Hahn-Banach separation theorem to the singleton set $\mathcal A= \{ (\xi_1, \hdots , \xi_k)\} \subseteq \mathbb C^k$  and the set $\mathcal B = \{ (g_1(z), \hdots, g_k(z) ) | z \in \Delta_{\rho} \} $,  each of which  is convex in $\mathbb C^k$.  If the sets $\mathcal A$, $\mathcal B$ are disjoint, then there  exists a continuous linear map $T : \mathbb C^k \rightarrow \mathbb C$ and $a, b \in \mathbb R$ such that 
\begin{equation*}
\Re\big( T(g_1(z), \hdots, g_k(z) ) \big) < a<b< \Re \big(T (\xi_1, \hdots , \xi_k)   \big),\qquad  \text{for all } z\in \Delta_{\rho}. 
\end{equation*}
The existence of a solution of \eqref{hb} for arbitrary complex numbers $l_j$ implies that there cannot exist such a map $T$. Thus, the sets $\mathcal A$ and $\mathcal B$ cannot be disjoint,   giving us a solution to the system \eqref{sys}. 

Thus, it suffices to show that \eqref{hb} has a solution in $\Delta_{\rho}$ for arbitrary $l_j\in\mathbb{C}$. Using the definition of $g_j$, it follows that  every complex number of modulus 
\[
\le \sum_{e^{-1}\rho\le p\le e\rho} w_pp^{-\sigma_0} \bigg|\sum_{j=1}^k l_j a_{L_j}(p)\bigg| 
\]  
can be represented by the left hand side of \eqref{hb} as $z$ runs over all the tuples in $\Delta_{\rho}$. 
Since 
\[
\bigg|\sum_{j=1}^k l_j \xi_j \bigg|  \le \sum_{j=1}^k |l_j| |\xi_j |,
\]
it is enough to show that 
\begin{equation} \label{ineq}
\sum_{j=1}^k |l_j| |\xi_j | \le  \sum_{e^{-1}\rho\le p\le e\rho} w_pp^{-\sigma_0} \bigg|\sum_{j=1}^k l_j a_{L_j}(p)\bigg|.  
\end{equation}

We have 
\begin{align}
 &\sum_{e^{-1}\rho\le p\le e\rho} w_pp^{-\sigma_0} \bigg|\sum_{j=1}^k l_j a_{L_j}(p)\bigg|\sum_{j=1}^k |l_j|
 \label{why}
\\
 &\qquad\qquad\ge \frac{1}{\max_j r_{L_j}}\sum_{e^{-1}\rho\le p\le e\rho} w_pp^{-\sigma_0} \bigg|\sum_{j=1}^k l_j a_{L_j}(p)\bigg|^2 
\nonumber
\\ &\qquad\qquad=  \frac{1}{\max_j r_{L_j}} \sum_{j=1}^k |l_j|^2 \sum_{e^{-1}\rho\le p\le e\rho}
 \frac{| a_{L_j}(p)|^2w_p}{p^{\sigma_0}} + \frac{1}{\max_j r_{L_j}} \sum_{i\ne j} l_i \overline{l_j}   \sum_{e^{-1}\rho\le p\le e\rho}
 \frac{a_{L_i}(p) \overline {a_{L_j}(p)}  }{p^{\sigma_0}}w_p 
 \label{rhs}
\end{align}
By partial summation, one can easily show that \eqref{SSOC} implies that
\[
\sum_{p\leq x}|a_{L_j}(p)|^2\log p = \kappa_j x + O\Big(\frac{x}{\log^A x}\Big).
\]
Therefore, we have
\begin{align*}
\sum_{a\rho\leq p\leq b\rho}\frac{|a_{L_j}(p)|^2}{p^{\sigma_0}} &= \kappa_j\frac{\rho^{1-\sigma_0}}{(1-\sigma_0)\log \rho}(b^{1-\sigma_0}-a^{1-\sigma_0}) \\
&\quad+\kappa_j\frac{\rho^{1-\sigma_0}}{(1-\sigma_0)\log^2\rho}\left(b^{1-\sigma_0}((1-\sigma_0)^{-1}-\log b)-a^{1-\sigma_0}((1-\sigma_0)^{-1}-\log a)\right)\\
&\quad+ O\Big(\frac{\rho^{1-\sigma_0}}{\log^A \rho}\Big),\\
\sum_{a\rho\leq p\leq b\rho}\frac{|a_{L_j}(p)|^2\log p}{p^{\sigma_0}}  &= \kappa_j\frac{\rho^{1-\sigma_0}}{1-\sigma_0}(b^{1-\sigma_0}-a^{1-\sigma_0}) + O\Big(\frac{\rho^{1-\sigma_0}}{\log^A \rho}\Big),
\end{align*}
where $a<b$ are arbitrary given positive constants. Now, by splitting the sum $\sum_{e^{-1}\rho\leq p\leq e\rho}$ into $\sum_{e^{-1}\rho\leq p\leq \rho}$ and $\sum_{\rho\leq p\leq e\rho}$, and  applying the above estimates, one can easily get after short calculations, that
\begin{equation*}\label{first term}
\sum_{e^{-1}\rho\le p\le e\rho}\frac{|a_{L_j}(p)|^2 w_p}{p^{\sigma_0}} = C_{\sigma_0} \kappa_j\frac{\rho^{1-\sigma_0}}{\log\rho}+O\Big(\frac{\rho^{1-\sigma_0}}{\log^A \rho}\Big).
\end{equation*}
Similarly, by \eqref{SSOC} we get for $i\ne j$,
\[
\sum_{e^{-1}\rho\le p\le e\rho}
\frac{a_{L_i}(p) \overline {a_{L_j}(p)}  }{p^{\sigma_0}}w_p = O\Big(\frac{\rho^{1-\sigma_0}}{\log^A \rho}\Big),
\]
so the latter sum on the right hand side of \eqref{rhs} is
\begin{align*}
O\Big(\frac{\rho^{1-\sigma_0}}{\log^A \rho}\Big) 
\sum_{i\ne j} |l_il_j|  = O\Big(\frac{\rho^{1-\sigma_0}}{\log^A \rho}\Big) \sum_{j=1}^k |l_j|^2. 
\end{align*}
Hence we see that for every $\varepsilon>0$ and sufficiently large $\rho$ we have
\begin{align*}
\sum_{e^{-1}\rho\le p\le e\rho} w_p p^{-\sigma} \bigg|\sum_{j=1}^k l_j a_{L_j}(p)\bigg| \sum_{j=1}^k |l_j| 
&\ge (C_{\sigma_0}-\varepsilon) \frac{\min_j \kappa_j}{\max_j r_{L_j}}\frac{\rho^{1-\sigma_0}}{\log\rho}\sum_{j=1}^k|l_j|^2
\\
&\ge \frac{C_{\sigma_0}-\varepsilon}{k} \frac{\min_j \kappa_j}{\max_j r_{L_j}}\frac{\rho^{1-\sigma_0}}{\log\rho}\bigg( \sum_{j=1}^k |l_j| \bigg)^2. 
\end{align*}
This proves \eqref{ineq} as required, since $|\xi_j |\leq \frac{c_{0}}{k} \frac{\min_j \kappa_j}{\max_j r_{L_j}}  \frac{\rho^{1-\sigma}}{\log \rho}$ for all $j$ and $c_0<C_{\sigma_0}$.
\end{proof}

We will use the following lemma due to Chen. Henceforth, $\|\cdot \|$ will denote the distance to the nearest integer. 
\begin{lemma}[Chen, \cite{chen}]  \label{lem:chen}
Let $\lambda_1, \hdots, \lambda_n$ and $\alpha_1, \hdots, \alpha_n$ be real numbers and assume the following property:  
for all integers $u_1, \hdots, u_n$ with $|u_j|\le M$, the assertion 
\[
u_1\lambda_1+ \cdots + u_n\lambda_n = 0
\]
implies that $u_1\alpha_1+ \cdots + u_n\alpha_n$ is an integer. Then for all positive real numbers $\delta_1, \hdots, \delta_n$ and for $T_1<T_2$, we have 
\[
\inf_{t \in [T_1, T_2]} \sum_{j=1}^n \delta_j \| \lambda_j t -\alpha_j\|^2 \le \frac{ \Delta}{4} \sin^2\left(\frac{\pi}{2(M+1)}\right) + \frac{\Delta M^n}{4\pi  (T_2-T_1)\Lambda  }, 
\] 
where 
\[
\Delta = \sum_{j=1}^n \delta_j 
\]
 and 
 \[
 \Lambda = \min\left\lbrace  \left| u_1\lambda_1+ \cdots + u_n\lambda_n  \right|  : u_j \in \mathbb{Z}, |u_j| \le M, \sum_{j=1}^n \lambda_ju_j \ne 0
  \right\rbrace .
 \]
\end{lemma}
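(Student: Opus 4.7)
The plan is to prove this quantitative Kronecker-type inequality via a Fourier-analytic averaging argument on $[T_1,T_2]$, exploiting the compatibility hypothesis to cancel the resonant contributions. Geometrically, the linear flow $t\mapsto (\lambda_1 t-\alpha_1,\dots,\lambda_n t-\alpha_n)\bmod 1$ on the torus $\mathbb{T}^n=(\mathbb{R}/\mathbb{Z})^n$ accumulates at the origin precisely because of the hypothesis, and one needs to make this accumulation quantitative on the window $[T_1,T_2]$.

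I would construct a non-negative auxiliary function $\Phi(t)$, trigonometric of degree at most $M$ in each of the variables $\lambda_j t-\alpha_j$, designed so that large values of $\Phi$ force every $\|\lambda_j t-\alpha_j\|$ to be small. A natural candidate is the product of one-dimensional Fejér-type kernels $\Phi(t)=\prod_{j=1}^n K_M(\lambda_j t-\alpha_j)$, whose ``width'' around its peak is $\asymp 1/(M+1)$, exactly matching the appearance of $\sin(\pi/(2(M+1)))$ in the stated bound.

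The core computation is the Fourier expansion
\[
\Phi(t)=\sum_{\substack{\vec u\in\mathbb Z^n\\ |u_j|\le M}}\prod_{j=1}^n \widehat{K_M}(u_j)\,e^{2\pi i\bigl(\sum_j u_j\lambda_j\bigr)t}\,e^{-2\pi i\sum_j u_j\alpha_j},
\]
which, upon integrating over $[T_1,T_2]$, separates into two regimes. The \emph{resonant} tuples with $\sum_j u_j\lambda_j=0$ integrate to $T_2-T_1$ with trivial phase, thanks to the hypothesis $\sum_j u_j\alpha_j\in\mathbb{Z}$, and assemble into a positive main term. The \emph{non-resonant} tuples contribute oscillatory integrals of modulus at most $1/(\pi\Lambda)$; since there are at most $(2M+1)^n$ of them, their aggregate contribution to the time average of $\Phi$ is $O(M^n/((T_2-T_1)\Lambda))$. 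An analogous computation applied to $\Phi^2$ controls the second moment on $[T_1,T_2]$.

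Finally, a Cauchy-Schwarz/concentration argument between these two moments extracts a point $t^*\in[T_1,T_2]$ at which $\Phi(t^*)$ is comparable to its supremum $(M+1)^n$, forcing each factor $K_M(\lambda_j t^*-\alpha_j)$ to be of size $\asymp M+1$ and hence each $\|\lambda_j t^*-\alpha_j\|$ to be $O(\sin(\pi/(2(M+1))))$; weighting by $\delta_j$ and summing produces the stated inequality. The step I expect to be most delicate is the extraction of the precise numerical constants $\tfrac14\sin^2(\pi/(2(M+1)))$ in the main term and $1/(4\pi)$ in the error term: this rests on a sharp Fejér-Jackson-type extremal inequality relating $\|y\|^2$ to trigonometric polynomials of degree $M$, together with careful bookkeeping of the Fourier coefficients of $K_M$, and it is here that the full strength of Chen's original argument is needed rather than a generic averaging.
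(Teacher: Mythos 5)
The paper does not actually prove this lemma; it cites Chen's result from \cite{chen} as a black box, so there is no ``paper's own proof'' to compare against. But your proposal has a genuine gap, and it lies precisely in the step you flag as delicate.

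The issue is the ``concentration argument'' you propose at the end. From the resonance analysis you obtain that the \emph{average} $\frac{1}{T_2-T_1}\int_{T_1}^{T_2}\Phi(t)\,dt$ is close to the resonant Fourier mass, which is $\gtrsim 1$ (not $\gtrsim (M+1)^n$). Even computing the second moment of $\Phi$ and applying $\sup\Phi\ge\overline{\Phi^2}/\overline{\Phi}$ only yields $\sup\Phi\gtrsim\bigl(\int_0^1 K_M^2\bigr)^n\approx\bigl(\tfrac{2}{3}(M+1)\bigr)^n$, i.e.\ a constant $c^n(M+1)^n$ with $c<1$. Passing from the product being $\ge c^n(M+1)^n$ to each factor $K_M(\lambda_jt^*-\alpha_j)$ being $\ge c^n(M+1)$, and then to $\|\lambda_jt^*-\alpha_j\|$ being small, loses another factor exponential in $n$ (since $K_M(y)\le 1/((M+1)\sin^2\pi y)$ gives only $\sin(\pi\|y\|)\lesssim c^{-n/2}/(M+1)$). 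The resulting bound would degrade exponentially in $n$, whereas the stated main term $\tfrac{\Delta}{4}\sin^2\bigl(\tfrac{\pi}{2(M+1)}\bigr)$ is independent of $n$. So this route cannot recover the lemma as stated.

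The mechanism that actually works avoids supremum extraction entirely and is a weighted-average argument. One writes
\[
\inf_{t\in[T_1,T_2]}\sum_{j}\delta_j\|\lambda_j t-\alpha_j\|^2\;\le\;\frac{\int_{T_1}^{T_2}\Bigl(\sum_j\delta_j\|\lambda_j t-\alpha_j\|^2\Bigr)\Phi(t)\,dt}{\int_{T_1}^{T_2}\Phi(t)\,dt},
\]
uses the elementary pointwise majorant $\|y\|^2\le\tfrac14\sin^2(\pi y)$ (valid because $\sin(\pi u)\ge 2u$ on $[0,\tfrac12]$) to turn each $\|\cdot\|^2$ into a trigonometric polynomial of degree one, and then the numerator becomes a trigonometric polynomial in $t$ whose constant (resonant) term can be evaluated exactly; the non-resonant terms give the $\Lambda$-error. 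The sharp constant $\sin^2\bigl(\tfrac{\pi}{2(M+1)}\bigr)$ does not come from the Fej\'er kernel but from the extremal choice of a non-negative polynomial kernel $J(y)=\bigl|\sum_{m=0}^{M-1}c_m e^{2\pi i m y}\bigr|^2$ minimizing $\int_0^1\sin^2(\pi y)J(y)\,dy / \int_0^1 J(y)\,dy$; this minimum is $\sin^2\bigl(\tfrac{\pi}{2(M+1)}\bigr)$, the optimizing $c_m$ being $\sin\tfrac{(m+1)\pi}{M+1}$ (a spectral computation for a tridiagonal form). In short, your Fourier/resonance setup is the right framework and matches the shape of the error term, but the correct deduction is a ratio-of-integrals bound with the $\sin^2$ majorization and the extremal kernel, not a max-extraction from moments with the Fej\'er kernel.
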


\begin{lemma}
	\label{lem:random sums}
Let $\sigma_0\in(1/2,1)$ be fixed and $L_j(s) \in \Spoly$, $j=1,2,\ldots,k$ be defined by the Dirichlet series $\sum_{n \ge 1} a_{L_j}(n)n ^{-s}$ for $\Re s >1 $, whose coefficients satisfy the first equality of \eqref{SSOC}. Moreover, let $\theta_p$, $e^{-1}\rho < p \le e\rho$, be any real numbers and $w_p=1- |\log (p/\rho)|$. Then for every positive constant $c'$ and every interval $T^{(l)} = [T+(l-1)T^\mu,T+ lT^\mu]$ with $0<\mu<1$, $1\leq l\leq T^{1-\mu}$ there is some $t_0\in T^{(l)}$ such that
\[\max_{1\leq j\leq k}
\left| 
\sum_{e^{-1}\rho\le p\le e\rho} \frac{  a_{L_j}(p)}  {p^{\sigma_0+it_0}} w_p - \sum_{e^{-1}\rho\le p\le e\rho} \frac{  a_{L_j}(p)e^{-2\pi i \theta_p} }  {p^{\sigma_0}}w_p
\right| \leq c' \frac{\rho^{1-\sigma_0} } { \log \rho}, \qquad (T\to\infty),
\]
where $\rho = \frac{1}{2Mc}\log T$, $c$ is a constant satisfying $c\mu>2\sinh(1)$, and $M$ is a sufficiently large constant that depends on $c'$.
\end{lemma}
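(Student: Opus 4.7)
The plan is to recognize the claim as a joint simultaneous Diophantine approximation problem: we need $t_0 \in T^{(l)}$ such that $p^{-it_0} \approx e^{-2\pi i\theta_p}$ uniformly over primes $p \in [e^{-1}\rho, e\rho]$. Using the elementary inequality $|e^{-it_0\log p} - e^{-2\pi i\theta_p}| \le 2\pi \|t_0\log p/(2\pi) - \theta_p\|$, the difference to be bounded is at most $2\pi \sum_p (|a_{L_j}(p)|w_p/p^{\sigma_0}) \cdot \|t_0\log p/(2\pi) - \theta_p\|$. I would then apply Cauchy--Schwarz to split this as
\[
2\pi \bigg(\sum_p \frac{|a_{L_j}(p)|^2 w_p}{p^{\sigma_0}}\bigg)^{\!\!1/2} \bigg(\sum_p \frac{w_p}{p^{\sigma_0}} \Big\|\frac{t_0\log p}{2\pi} - \theta_p\Big\|^2\bigg)^{\!\!1/2}.
\]
The first factor is $O(\rho^{(1-\sigma_0)/2}/(\log\rho)^{1/2})$ uniformly in $j$ by the same computation that appears in the proof of Lemma \ref{lem:denseness}, with implied constants depending on $\max_j\kappa_j$ and $\sigma_0$. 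Crucially, the second factor is independent of $j$, so a single choice of $t_0$ will control all the $L_j$ simultaneously.

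Next, I would apply Chen's lemma (Lemma \ref{lem:chen}) on the interval $T^{(l)}$ of length $T^\mu$, taking $\lambda_p = \log p/(2\pi)$, $\alpha_p = \theta_p$, and $\delta_p = w_p/p^{\sigma_0}$ for each prime $p \in [e^{-1}\rho, e\rho]$. The hypothesis is satisfied vacuously: by unique factorization the $\log p$ are $\mathbb{Q}$-linearly independent, so $\sum u_p\log p = 0$ with $|u_p|\le M$ forces all $u_p = 0$ and hence $\sum u_p\alpha_p = 0 \in \mathbb{Z}$. A routine computation gives $\Delta = \sum w_p/p^{\sigma_0} = O(\rho^{1-\sigma_0}/\log\rho)$, so the main error term $(\Delta/4)\sin^2(\pi/(2(M+1))) = O(\Delta/M^2)$. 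Combined with the Cauchy--Schwarz factor this contributes $O(\rho^{1-\sigma_0}/(M\log\rho))$, which is at most $(c'/2)\rho^{1-\sigma_0}/\log\rho$ once $M$ is chosen large enough in terms of $c'$.

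The main obstacle is the second error term $\Delta M^n/(4\pi T^\mu \Lambda)$. A lower bound on $\Lambda$ comes from unique factorization: for $|u_p|\le M$ not all zero, writing $\sum u_p \log p = \log(A/B)$ with $A\ne B$ coprime positive integers bounded by $(e\rho)^{Mn}$ gives $|\log(A/B)| \ge 1/(2\max(A,B))$, so $\Lambda \ge (4\pi(e\rho)^{Mn})^{-1}$. By the prime number theorem, the number of primes $n$ in the range satisfies $n \sim 2\sinh(1)\rho/\log\rho$, whence
\[
\log\!\big((e\rho)^{Mn}\big) = Mn(1+\log\rho) \sim 2M\sinh(1)\rho = \frac{\sinh(1)}{c}\log T
\]
after substituting $\rho = \log T/(2Mc)$. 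The hypothesis $c\mu > 2\sinh(1)$ then ensures $M^n(e\rho)^{Mn} = o(T^\mu)$, which makes this error contribution $o(\Delta)$ and completes the Chen bound. Taking square roots and multiplying by the first Cauchy--Schwarz factor yields a $t_0 \in T^{(l)}$ satisfying the desired inequality for every $j$ simultaneously.
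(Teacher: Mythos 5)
Your proof is correct and follows essentially the same route as the paper: after the same elementary bound and Cauchy--Schwarz decomposition (the paper uses $\delta_p=p^{-\sigma_0}$ via $w_p^2\le w_p$, you use $\delta_p=w_p p^{-\sigma_0}$; either works), both arguments invoke Chen's lemma on $T^{(l)}$, bound $\Lambda$ from below by unique factorization of integers, and verify via the prime number theorem and the choice $\rho=\log T/(2Mc)$ with $c\mu>2\sinh(1)$ that the secondary error term in Chen's bound is negligible, leaving $O(\Delta/M^2)$ to be made small by taking $M$ large in terms of $c'$. Your write-up of the $\Lambda$ lower bound (via coprime $A\ne B\le (e\rho)^{Mn}$ and $\log(A/B)\ge 1/(2\max(A,B))$) is a tidier rendering of the same estimate the paper gives.
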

\begin{proof}
Using the Cauchy--Schwartz inequality and the fact that $w_p^2\leq w_p$, we have  
	\begin{align}
&\left| \sum_{e^{-1}\rho\le p\le e\rho} \frac{ a_{L_j}(p) w_p e^{-2\pi i \theta_p} } {p^{\sigma_0}}  - \sum_{e^{-1}\rho\le p\le e\rho} \frac{  a_{L_j}(p)w_p}  {p^{\sigma_0+it_0}}  \right| \nonumber\\
&\qquad\qquad\qquad\qquad\qquad= 
\left| \sum_{e^{-1}\rho\le p\le e\rho} \frac{ a_{L_j}(p) w_p}{p^{\sigma_0+it_0}} 
\left( e^{-2\pi i  \big( \theta_p -\frac{t_0 \log p} {2\pi} \big)  }  -1\right) \right|
\nonumber
\\
\nonumber
&\qquad\qquad\qquad\qquad\qquad\ll  \sum_{e^{-1}\rho\le p\le e\rho} \frac{ |a_{L_j}(p)|w_p}{p^{\sigma_0}} 
\left\|  \theta_p - \frac{t_0 \log p}{2 \pi }\right\| 
\\
\label{expr}
&\qquad\qquad\qquad\qquad\qquad\leq \left(  \sum_{e^{-1}\rho\le p\le e\rho} \frac{ |a_{L_j}(p)|^2 w_p}{p^{\sigma_0}} \right)^{\! \!1/2 } \left(  \sum_{e^{-1}\rho\le p\le e\rho} \frac{1}{p^{\sigma_0}}  \left\|  \theta_p - \frac{t_0 \log p}{2 \pi }\right\|^2 \right)^{\!\! 1/2}
	\end{align}

For the latter sum on the right hand side of \eqref{expr}, we shall apply Lemma \ref{lem:chen} with $M$ being a~sufficiently large constant,   $T_1$  and $T_2$ being the endpoints of the interval $T^{(l)}$, 
\[  \lambda_j = \frac{\log p_j}{2\pi},  \qquad  \alpha_j = \theta_{p_j} , \quad \text{ and } \qquad \delta_j  =  \frac{1}{p_j^{\sigma_0}},    \]
where $p_1, \hdots , p_n$ run over all primes lying in the interval $[e^{-1}\rho, e\rho]$. The first condition of Lemma~\ref{lem:chen} is satisfied due to linear independence of the logarithms of primes.  

In order to estimate $\Lambda$, note that $\left|\sum_{e^{-1}\rho\le p_j\le e\rho}u_j\log p_j\right|$ is minimum when $u_j$'s are chosen to be integers such that  $\left|\sum_{\substack{{e^{-1}\rho\le p_j\le e\rho}\\ u_j\geq 0}}u_j\log p_j\right|$ is close to
$\left|\sum_{\substack{{e^{-1}\rho\le p_j\le e\rho}\\ u_j< 0}}u_j\log p_j\right|$. Without loss of generality we may assume that $\prod_{\substack{{e^{-1}\rho\le p_j\le e\rho}\\ u_j< 0}}p_j^{-u_j}> \prod_{\substack{{e^{-1}\rho\le p_j\le e\rho}\\ u_j\geq 0}}p_j^{u_j}$. Therefore
\[\left|\sum_{{e^{-1}\rho\le p_j\le e\rho}}u_j\log p_j\right|=\left|\log\left(\frac{\prod_{\substack{{e^{-1}\rho\le p_j\le e\rho}\\ u_j< 0}}p_j^{-u_j}} {\prod_{\substack{{e^{-1}\rho\le p_j\le e\rho}\\ u_j\geq 0}}p_j^{ u_j}}\right)\right|
\gg \left|\log\left(1+\frac{1}{\prod_{\substack{{e^{-1}\rho\le p_j\le e\rho}\\ u_j\geq 0}}p_j^{u_j}}\right)\right|.\]
Since 
\[\prod_{\substack{{e^{-1}\rho\le p_j\le e\rho}\\ u_j\geq 0}}p_j^{u_j}\ll \exp\left(M\sum_{e^{-1}\rho\le p\le e\rho}\log p\right)\leq \exp\left(M(2\sinh(1)+o(1))\rho\right),\]
we have the lower bound 
\[
\Lambda \gg \exp(-M(2\sinh(1)+o(1)) \rho ). 
\]

By Lemma \ref{lem:chen}, we have 
\begin{multline}\label{chen ineq}
\inf_{t \in T^{(l)}}\sum_{e^{-1}\rho\le p\le e\rho} \frac{1}{p^{\sigma_0}}  \left\|  \theta_p - \frac{t \log p}{2 \pi }\right\|^2 \\\ll \Delta  \left ( \frac{1}{(M+1)^2} + \frac{M^{(2\sinh(1)+o(1))\rho}   \exp (M(2\sinh(1)+o(1))\rho) }{T^\mu} \right), 
\end{multline}
where $\Delta = \sum_{e^{-1}\rho\le p\le e\rho} p^{-\sigma_0}$. 
Since $\rho = \log T/ (2Mc)$ and $c\mu>2\sinh(1)$,  we see that the term in parenthesis on the right hand side of \eqref{chen ineq} is 
\begin{align*}
&\ll\frac{1}{(M+1)^2} + \exp \left( 2M\rho(2\sinh(1)+o(1)-c\mu)  \right) 
\\
&=\frac{1}{(M+1)^2} + o(1).
\end{align*}
Since $\Delta \ll \frac{\rho^{1-\sigma_0}}{\log \rho}$, we have obtained that there is $t_0\in T^{(l)}$ such that the expression in \eqref{expr} is 
\begin{equation}
\ll \frac{1}{M+1} \left(  \sum_{e^{-1}\rho\le p\le e\rho} \frac{ |a_{L_j}(p)|^2 w_p}{p^{\sigma_0}} \right)^{\! \!1/2 } \left(  \frac{\rho^{1-\sigma_0}} { \log \rho} \right)^{1/2}. 
\end{equation}
Thus,  by taking $M$ sufficiently large  (depending on  $c'$), the proof is complete.
\end{proof} 

\section{Proof of Theorem \ref{thm:main}}
First, let us choose $\mu< \delta$  and define the intervals $T^{(l)} = [T+(l-1)T^{\mu}, T+lT^{\mu} ]$, $1\le l\le T^{1-\mu}$. By \eqref{zero density}, we see that for sufficiently large $T$, there exists an index $l_0 $, with $2\le l_0\le T^{1-\mu}-1$, such that none of the $L_j(s)$'s vanish on 
\[
\big\{a+ib : a\ge \sigma_0, \: b \in T^{(l_0-1)}\cup T^{(l_0)}\cup T^{(l_0+1)} \big\}.
\] 
Put $\tau = (\log T)^{(1+\sigma_0)/2}(\log\log T)^{1/2}$ and $\rho = (\log T)/(2Mc)$, where $c$ is the positive constant defined in Lemma \ref{lem:random sums} and $M$ will be chosen later. Then using Lemma \ref{lem:shortDir} gives for any real numbers $\theta_1, \hdots, \theta_k$, 
\begin{align}\label{short_sum}
\Re e^{-i\theta_j}\log L_j(s_0+it'_j)\geq 
\frac{1}{2} 
\Re\sum_{|\log (p/\rho)| \le 1} \frac{ a_{L_j}(p)} { p^{s_0}} w_p + O\left(\frac{(\log T)^{1-\sigma_0}}{M\log\log T}\right),
\end{align}
for arbitrary given $t_0\in T^{(l_0)}$ and some $t'_1,\ldots,t'_k\in[-\tau,\tau]$, where $w_p = 1-  \left| \log \frac{p}{\rho}  \right|$.

  Let $\theta_p$, $\rho e^{-1} < p \le \rho e$, be  real numbers to be chosen suitably later. By Lemma \ref{lem:random sums}, we obtain that  for some $t_0 \in T^{(l_0)}$, the right hand side of \eqref{short_sum} equals
\begin{equation} \label{theta p}
\frac{1}{2}\Re\sum_{|\log(p/\rho)|   \le 1} \frac{a_{L_j}(p) w_p}{p^{\sigma_0}} e^{ -2\pi i \theta_p } +E(T), 
\end{equation}
for some $t_0\in T^{(l_0)}$, where $E(T)\leq c'\left(\frac{(\log T)^{1-\sigma_0}}{\log\log T}\right)$ for arbitrary given positive constant $c'$.

To complete the proof, we will show that the factors $e^{ -2\pi i \theta_p } $ can be replaced by arbitrary complex numbers of modulus $\le 1$, following which we can apply Lemma~\ref{lem:denseness}. In order to see this, let us note that applying Lemma 6 of \cite{good} to the system 
\[
\sum_{\rho e^{-1} <p \le e\rho}  \frac{a_{L_j}(p) w_p}{p^{\sigma_0}}  z_p = b_{j},   \qquad  1\leq j\leq k
\]
implies that if this system has  a solution in complex numbers $z_p$, $\rho e^{-1} <p \le e\rho$, with $|z_p|\le 1$,  then one can also find a solution $z'_p$, $\rho e^{-1} <p \le e\rho$ and $|z'_p|\leq 1$, of this system for which the strict inequality $|z'_p|<1$ holds for at most $k+1$  primes $p$. This allows us to conclude that for any given complex numbers $z_p$, $\rho e^{-1}\leq p\leq e\rho$, with $|z_p|\leq 1$, there are $z'_p = e^{-2\pi i\theta_p}$, $\rho e^{-1}\leq p\leq e\rho$, with suitable real $\theta_p$ such that
\begin{equation}\label{dyadic}
\sum_{\rho e^{-1} <p \le e\rho} \frac{a_{L_j}(p) w_p}{p^{\sigma_0}} z_p 
=	 \sum_{\rho e^{-1} <p \le e\rho} \frac{a_{L_j}(p) w_p}{p^{\sigma_0}} 
e^{-2\pi i \theta_p} + O\left(\rho^{-{\sigma_0}}\right), 
\end{equation}  
for $\frac{1}{2}< \sigma_0 < 1$.

Thus, we have 
\begin{align*}
\Re e^{-i\theta_j}\log L_j(\sigma_0+it_0+it'_j)
\gg \Re\sum_{|\log(p/\rho)|   \le 1} \frac{a_{L_j}(p) w_p}{p^{\sigma_0}} z_p + E(T) + O\left(\frac{(\log T)^{1-\sigma_0}}{M\log\log T}\right).
\end{align*}
So, applying Lemma \ref{lem:denseness} with  $\xi_1,\ldots,\xi_k$ being positive real numbers of size $\rho^{1-\sigma_0}/\log\rho$ and taking sufficiently large $M$ (notice that $c'$ tends to $0$ as $M$ tends to $\infty$) completes the proof, since $\rho^{1-\sigma_0}/\log\rho\asymp \frac{(\log T)^{1-\sigma_0}}{M^{1-\sigma_0}\log\log T}$.


\end{document}